\newtheorem{theorem}{Theorem}
\newtheorem{lemma}[theorem]{Lemma}
\newtheorem{proposition}[theorem]{Proposition}
\theoremstyle{remark}
\renewcommand\O{\mathcal{O}}
\newcommand\calS{\mathcal{S}}
\newcommand\Z{\mathbb{Z}}
\newcommand\htt{\mathrm{ht}}
\numberwithin{equation}{section}
\numberwithin{theorem}{section}
\title{Upper bounds on polynomials with small Galois group}
\author{Robert J. Lemke Oliver}
\address{Department of Mathematics, Tufts University, 503 Boston Ave, Medford, MA 02155}
\email{robert.lemke\_oliver@tufts.edu}
\author{Frank Thorne}
\address{Department of Mathematics, University of South Carolina, 1523 Greene St, Columbia, SC 29201}
\email{thorne@math.sc.edu}
\begin{document}

\maketitle

\begin{abstract}
When monic integral polynomials of degree $n \geq 2$ are ordered by the maximum of the absolute value of their coefficients, the Hilbert irreducibility theorem implies that asymptotically $100\%$ are irreducible and have Galois group isomorphic to $S_n$.  In particular, amongst such polynomials whose coefficients are bounded by $B$ in absolute value, asymptotically $(1+o(1))(2B+1)^n$ are irreducible and have Galois group $S_n$.  When $G$ is a proper transitive subgroup of $S_n$, however, the asymptotic count of polynomials with Galois group $G$ has been determined only in very few cases.  

Here, we show that if there are strong upper bounds on the number of degree $n$ fields with Galois group $G$, then there are also strong bounds on the number of polynomials with Galois group $G$.  For example, for any prime $p$, we show that there are at most $O(B^{3 - \frac{2}{p}} (\log B)^{p - 1})$ polynomials with Galois group $C_p$ and coefficients bounded by $B$.
\end{abstract}

\section{Introduction}

Fix an integer $n \geq 2$ and let $G$ be a subgroup of $S_n$.  For any $B \geq 1$, let
\[
P_n(B;G) 
	:= \#\{ f  \in \mathbb{Z}[x] \text{ monic, degree $n$}: \mathrm{ht}(f) \leq B, \mathrm{Gal}(f) \simeq G\},
\]
where $\mathrm{ht}(f)$ is the maximum absolute value of the coefficients of $f$. 
The Hilbert irreducibility theorem implies that asymptotically $100\%$ of such polynomials are irreducible and have Galois group $S_n$, so that $P_n(B;S_n) \sim (2B+1)^n$.  There are very few other groups, however, for which the asymptotic order of $P_n(B;G)$ is known, even in small degrees; see \cite{ChowDietmann} for recent results in degrees $3$ and $4$.  Instead, one often must settle for placing upper bounds on $P_n(B;G)$.

There are two broad approaches in the literature to do so.  The first uses the large sieve (often in conjunction with Serre's notion of thin sets) to bound from above the number of polynomials that avoid certain behavior modulo small primes; the record via this approach is due to 
Gallagher \cite{Gallagher}, who shows that $P_n(B; G) \ll B^{n - \frac12} (\log B)^{1 - \gamma_n}$ with $\gamma_n > 0$ for all $G \not \simeq S_n$ simultaneously, and 
Zywina \cite{Zywina}, who improves this to $P_n(B;G) \ll B^{n - \frac{1}{2}}$ for large $n$.

The other approach is to bound the number of polynomials such that some auxiliary Galois resolvent has an exceptional property; the pioneer  of this approach and the overall record-holder is Dietmann \cite{DietmannDist, Dietmann}, who shows that $P_n(B;G) \ll B^{n - 1 + \frac{1}{[S_n:G]} + \epsilon}$ for $G \neq A_n$ and that
 $P_n(B;A_n) \ll B^{n - 2 + \sqrt{2} + \epsilon}$.
 
Here, we introduce a third approach that has the potential to yield substantially stronger results but 
requires a hypothesis that is unproved in many cases. 
Before stating a general theorem, we begin with two unconditional results that are reflective of our method.

\begin{theorem}\label{thm:cyclic}
Let $p \geq 3$ be prime and let $C_p$ denote the cyclic group of order $p$.  Then $P_p(B;C_p) \ll B^{3 - \frac{2}{p}} (\log B)^{p - 1}$.
\end{theorem}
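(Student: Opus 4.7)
The plan is to count polynomials $f$ with Galois group $C_p$ by first grouping them according to the cyclic degree-$p$ field $K_f := \mathbb{Q}[x]/(f)$ that each one defines. This yields
\[
P_p(B; C_p) = \sum_K N_K(B),
\]
where $K$ ranges over cyclic degree-$p$ extensions of $\mathbb{Q}$ and $N_K(B)$ counts monic $f$ of height at most $B$ with $K_f \cong K$. Since the $p$ roots of each such $f$ form a single $\mathrm{Gal}(K/\mathbb{Q})$-orbit in $\O_K$, we have
\[
p \cdot N_K(B) = \#\{\alpha \in \O_K : \mathbb{Q}(\alpha) = K,\ \mathrm{ht}(\mathrm{minpoly}(\alpha)) \leq B\}.
\]

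The next step is to bound $N_K(B)$ per $K$ by lattice-point counting in the Minkowski embedding $\O_K \hookrightarrow \mathbb{R}^p$ (note that $K$ is totally real, since $p$ is odd), in which $\O_K$ is a lattice of covolume $\sqrt{|d_K|}$. Cauchy's bound gives $|\sigma(\alpha)| \leq B+1$ for every embedding $\sigma$, placing $\alpha$ in the cube $[-B-1, B+1]^p$. Because $p$ is prime, any $\alpha \in \O_K \setminus \mathbb{Z}$ generates $K$; for such $\alpha$, $\mathrm{disc}(\alpha) \geq |d_K|$, from which $\max_i |\sigma_i(\alpha)| \gg |d_K|^{1/(p(p-1))}$, giving a lower bound on the non-trivial successive minima of $\O_K$ in the sup norm. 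Moreover, the constraint $|a_2| \leq B$ controls $\mathrm{Tr}(\alpha^2) = a_1^2 - 2a_2$ (via Newton's identity), which after replacing $\alpha$ by $\alpha - m$ for a suitable integer $m$ to make $|\mathrm{Tr}(\alpha)|$ small, confines $\alpha$ to an $\ell^2$-ellipsoid much smaller than the cube. Balancing these two inputs through Minkowski's second theorem should yield a per-field bound of the form $N_K(B) \ll B^a / |d_K|^b$ with appropriate $a, b > 0$.

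Finally, I would sum over $K$ using the known upper bound
\[
\#\{K \text{ cyclic of degree } p : |d_K| \leq X\} \ll X^{1/(p-1)},
\]
a consequence of class field theory and the enumeration of order-$p$ Dirichlet characters. Abel summation against this count converts the per-field estimate into the desired total bound; the factor $(\log B)^{p-1}$ should emerge from boundary contributions in the partial summation, together with lower-order/divisor-function-type terms in the field count.

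The main obstacle is obtaining the correct per-field bound. The direct volume-over-covolume count $N_K(B) \ll (2B)^p/\sqrt{|d_K|}$ is too weak: summed over fields with $|d_K| \ll B^{p(p-1)}$ it yields only $\ll B^p$ (or $\ll B^3 \log B$ when $p = 3$), far short of $B^{3-2/p}$. The improvement must combine (i) the ellipsoid restriction coming from the quadratic-trace constraint $|a_2| \leq B$ with (ii) the discriminant-based lower bound on successive minima of $\O_K$, and then balance these against the $X^{1/(p-1)}$ count of $C_p$-fields. This interaction -- not the overall grouping strategy -- is the technical heart of the argument and is where the exponent $3 - 2/p$ (rather than something larger) ought to appear.
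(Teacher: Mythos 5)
Your global strategy is exactly the paper's: group polynomials by the cyclic field $K$ they cut out, prove a per-field multiplicity bound that decays in $|\mathrm{Disc}(K)|$, and sum against the classical bound $N_p(X;C_p)\ll X^{1/(p-1)}$ by partial summation. The lower bound $\|\alpha\|\gg|\mathrm{Disc}(K)|^{1/(p(p-1))}$ for $\alpha\in\O_K\setminus\Z$ is also precisely the paper's Lemma \ref{lem:height-bound} (Ellenberg--Venkatesh), applicable here because $p$ is prime. But the step you flag as the technical heart is genuinely missing, and the mechanism you propose for it does not work. The constraint you extract from $|a_1|,|a_2|\leq B$ is $\mathrm{Tr}(\alpha^2)=a_1^2-2a_2\leq B^2+2B$, and even after recentering by the nearest integer to $\mathrm{Tr}(\alpha)/p$ this only places $\alpha$ in an $\ell^2$-ball of radius $\asymp B$ (when $|a_1|\asymp B$, the recentered trace of the square is still $\asymp B^2$). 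That ball has volume $\asymp_p B^p$ --- smaller than the cube only by a constant depending on $p$ --- so no balancing against successive minima can save you: for a single \emph{fixed} cyclic field $K_0$ of degree $p$, the number of $\alpha\in\O_{K_0}$ modulo $\Z$-translation with all conjugates of size $O(B)$ is $\asymp B^{p-1}$, which already exceeds the target $B^{3-2/p}$ for $p\geq 5$. The per-field count must use the higher coefficients, in particular $|a_p|=|N(\alpha)|\leq B$, which your region ignores entirely.

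The missing ingredient is the Mahler measure: by Jensen's formula, $\mathrm{ht}(f_\alpha)\leq B$ forces $\prod_\sigma\max\{1,|\sigma(\alpha)|\}\leq\sqrt{p+1}\,B$, and the corresponding region $\Omega_Y\subseteq K_\infty$ has volume only $O(Y(\log Y)^{p-1})$ --- linear in $Y$ up to logarithms, not $Y^p$. Counting $\O_K$-points in $\Omega_Y$ via Davenport's lemma, after rescaling by the spacing $\lambda=\min\{\|\alpha\|:\alpha\in\O_K\setminus\Z\}\gg|\mathrm{Disc}(K)|^{1/(p(p-1))}$ and treating separately the slices of $\Omega_Y$ where few coordinates exceed $\lambda$, gives the true per-field bound $M_K(B)\ll B(\log B)^{p-1}/\lambda$; this is where the factor $(\log B)^{p-1}$ actually originates (it is $r_1+r_2-1=p-1$ for a totally real field), not from boundary terms in the partial summation. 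The Mahler measure is also needed for your summation range: the correct bound is $|\mathrm{Disc}(f)|\ll m(f)^{2p-2}\ll B^{2p-2}$, whereas the naive root bound only gives $B^{p(p-1)}$, and with the latter range the exponent $3-\frac{2}{p}$ is out of reach. With these two inputs your partial summation does close:
\[
\sum_{|\mathrm{Disc}(K)|\leq cB^{2p-2}}\frac{B(\log B)^{p-1}}{|\mathrm{Disc}(K)|^{1/(p(p-1))}}\ll B(\log B)^{p-1}\cdot\bigl(B^{2p-2}\bigr)^{\frac{1}{p-1}-\frac{1}{p(p-1)}}=B^{3-\frac{2}{p}}(\log B)^{p-1}.
\]
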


More generally, for any $n \geq 2$, let $P_n^\mathrm{gal}(B)$ denote the number of monic, irreducible polynomials $f \in \mathbb{Z}[x]$ whose coefficients are bounded in absolute value by $B$, and for which the field $\mathbb{Q}(x)/f(x)$ is Galois.

\begin{theorem}\label{thm:galois}
If $n \geq 5$, then $P_n^\mathrm{gal}(B) \ll B^{\frac{3}{4}n + \frac{1}{4} + \epsilon}$.
\end{theorem}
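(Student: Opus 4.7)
The plan is to count each polynomial $f$ contributing to $P_n^{\mathrm{gal}}(B)$ via its associated pair $(K, \alpha)$, where $K := \mathbb{Q}[x]/(f)$ is the (Galois) field and $\alpha \in \O_K$ is a root. Since $K/\mathbb{Q}$ is Galois of degree $n$, all $n$ roots of $f$ lie in $\O_K$, so each $f$ gives rise to exactly $n$ such pairs, and
\[
n \cdot P_n^{\mathrm{gal}}(B) \le \sum_{K/\mathbb{Q}\text{ Galois of degree }n} \#\{\alpha \in \O_K : \mathbb{Q}(\alpha) = K, \, \htt(f_\alpha) \le B\}.
\]
Standard bounds give $|\mathrm{disc}(K)| \le |\mathrm{disc}(f_\alpha)| \ll_n B^{2n-2}$, so only $K$ with $|\mathrm{disc}(K)| \le C_n B^{2n-2}$ contribute.

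For each fixed $K$ I would bound the inner count by the geometry of numbers applied to the Minkowski embedding $\O_K \hookrightarrow K \otimes_{\mathbb{Q}} \mathbb{R}$. The condition $\htt(f_\alpha) \le B$ confines $\alpha$ to the preimage $R_B$ of the coefficient cube $[-B,B]^n$ under the map $\alpha \mapsto (e_1(\sigma\alpha), \ldots, e_n(\sigma\alpha))$, whose Jacobian has absolute value $\sqrt{|\mathrm{disc}(f_\alpha)|}$. Analyzing this carefully yields a lattice-point estimate of the form $\#(\O_K \cap R_B) \ll B^n / \sqrt{|\mathrm{disc}(K)|}$, together with lower-order terms controlled by the successive minima of $\O_K$.

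For the outer sum I would apply bounds on the number of Galois extensions of $\mathbb{Q}$ of degree $n$ with bounded discriminant. Each such $K$ has Galois group a transitive subgroup $G \le S_n$ of order $n$, and for each such $G$ one has $N(X;G) \ll_\epsilon X^{\alpha(G) + \epsilon}$ with some $\alpha(G) < 1$: for abelian $G$ via Wright's theorem, and for the non-abelian $G$'s that arise at $n \ge 6$ via recent unconditional results on counting Galois extensions. Splitting the outer sum dyadically and balancing the per-field bound against the field count across the discriminant range (treating the small- and large-discriminant regimes separately) produces the stated exponent $\tfrac{3n+1}{4} + \epsilon$.

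The main obstacle is producing a per-field lattice estimate strong enough in the large-discriminant regime to offset the growth of $N(X;G)$. The region $R_B$ is highly non-convex, with long thin ``tentacles'' in directions where one conjugate of $\alpha$ is large and the others small, so naive volume-to-covolume arguments are wasteful; one must track how lattice points distribute along these tentacles. A secondary obstacle is ensuring a uniform field-counting bound $N(X;G) \ll_\epsilon X^{\alpha(G)+\epsilon}$ for every transitive permutation group of order $n$, including the non-solvable groups that appear as $n$ grows.
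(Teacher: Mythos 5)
Your overall architecture --- reduce to counting pairs $(K,\alpha)$, bound the multiplicity per field, and sum against a field-counting bound for Galois degree-$n$ fields of discriminant $\ll B^{2n-2}$ --- is exactly the paper's. But your per-field estimate is where the argument breaks. The claimed bound $\#(\O_K \cap R_B) \ll B^n/\sqrt{|\mathrm{Disc}(K)|}$ rests on treating $\mathrm{vol}(R_B)$ as if it were about $(2B)^n$; in fact the region $R_B$ has volume only $O_n(B(\log B)^{r_1+r_2-1})$. (This is the paper's central observation: if $\htt(f_\alpha)\le B$ then by Jensen's formula the Mahler measure satisfies $\prod_\sigma \max\{1,|\alpha_\sigma|\}\le \sqrt{n+1}\,B$, which confines $\alpha$ to a thin star-shaped region whose volume, and the volumes of all of whose coordinate projections, are $O(B(\log B)^{n-1})$; Davenport's lemma then handles the ``tentacles'' you correctly worry about and gives $M_K(B)\ll B(\log B)^{n-1}$ uniformly in $K$.) Moreover, even if one granted your estimate $B^n/\sqrt{|\mathrm{Disc}(K)|}$, it cannot produce the stated exponent: since the Ellenberg--Venkatesh count is $N(X)\ll X^{3/8+\epsilon}$ with $3/8<1/2$, the dyadic sum $\sum_X X^{3/8}\cdot B^n X^{-1/2}$ is dominated by the \emph{smallest} discriminants, and a single field of bounded discriminant already contributes $B^n$ under your bound. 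So the ``balancing'' step goes the wrong way and yields only the trivial bound $B^n$, not $B^{\frac34 n+\frac14+\epsilon}$.

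Two smaller points. First, you do not need a group-by-group bound $N(X;G)\ll X^{\alpha(G)+\epsilon}$ for every transitive group of order $n$ (and for non-solvable groups such bounds are not routine); the single estimate of Ellenberg and Venkatesh, that the number of \emph{all} Galois fields of degree $n\ge 5$ and discriminant at most $X$ is $O(X^{3/8+\epsilon})$, suffices and is what forces the hypothesis $n\ge 5$. Second, once the uniform bound $M_K(B)\ll B(\log B)^{n-1}$ is in hand, no balancing is needed at all: one simply multiplies it by the number of admissible fields, $O\bigl(B^{(2n-2)(3/8+\epsilon)}\bigr)$, to get $(2n-2)\cdot\frac38+1=\frac34 n+\frac14$.
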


Our approach is as follows.  When $G \subseteq S_n$ is transitive, any polynomial $f(x)$ counted by $P_n(B;G)$ cuts out a field $\mathbb{Q}(x)/f(x)$ whose normal closure has Galois group $G$ and whose discriminant is $O_n(B^{2n-2})$. In Section \ref{sec:multiplicity}, we bound from above the multiplicity with which a given field is cut out in this manner.  It follows that if there are not too many fields with Galois group $G$, then there can also not be too many polynomials with Galois group $G$.  To make this precise, let
\[
\mathcal{F}_n(X;G)
	:= \{ K/\mathbb{Q} : [K:\mathbb{Q}] = n, \mathrm{Gal}(\widetilde{K}/\mathbb{Q}) \simeq G, |\mathrm{Disc}(K)| \leq X\},
\]
where $\widetilde{K}$ is the normal closure of $K/\mathbb{Q}$ and $\mathrm{Disc}(K)$ is the absolute discriminant of $K$, and set $N_n(X;G) := \#\mathcal{F}_n(X;G)$.

\begin{theorem}\label{thm:general}
With notation as above, assume for some constant $e > \frac{1}{n^2-n}$ that the bound $N_n(X;G) \ll X^e$ holds for every sufficiently large $X$, with an implied constant depending at most on $G$ and $e$.  Then
\[
P_n(B;G) 
	\ll B^{e(2n-2) + 1}(\log B)^{n-1}.
\]
If the group $G$ is {primitive}, then this may be improved to $P_n(B;G) \ll B^{e(2n-2) + 1 - \frac{2}{n}}(\log B)^{n-1}$.
\end{theorem}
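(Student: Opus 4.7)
The plan is to map each $f$ counted by $P_n(B;G)$ to the field $K_f := \mathbb{Q}[x]/(f(x))$, bound the multiplicity of this map, and sum over $K \in \mathcal{F}_n(X;G)$.

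If $f$ is counted by $P_n(B;G)$ with $G$ transitive, then $K_f$ is a degree-$n$ number field whose normal closure has Galois group $G$. The polynomial discriminant $\mathrm{disc}(f)$ is a polynomial of degree $2n-2$ in the coefficients of $f$, so $|\mathrm{disc}(f)| \ll_n B^{2n-2}$; since $\mathrm{Disc}(K_f) \mid \mathrm{disc}(f)$, we have $K_f \in \mathcal{F}_n(X;G)$ with $X := C_n B^{2n-2}$.

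For each $K$, write $M(K,B)$ for the number of such $f$ with $K_f \cong K$. Up to the bounded factor $|\mathrm{Aut}(K/\mathbb{Q})|$, $M(K,B)$ equals the number of primitive $\theta \in \mathcal{O}_K$ with $\mathrm{ht}(f_\theta) \leq B$, which is a lattice-point count in the rank-$n$ Minkowski lattice $\mathcal{O}_K$ inside the bounded region $\{\theta : |a_k(\theta)| \leq B \text{ for all } k\}$. I would invoke the results of Section~\ref{sec:multiplicity}, which should give a uniform bound of the form $M(K,B) \ll_n B(\log B)^{n-1}$ (the log factor arising from the unit-like directions in the region). Combined with the hypothesis $N_n(X;G) \ll X^e$,
\[ P_n(B;G) \;\leq\; \sum_{K \in \mathcal{F}_n(X;G)} M(K,B) \;\ll\; B(\log B)^{n-1} \cdot X^e \;=\; B^{e(2n-2) + 1}(\log B)^{n-1}, \]
proving the first assertion. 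The hypothesis $e > \frac{1}{n(n-1)}$ ensures the resulting exponent is not too large, and heuristically corresponds to the regime in which the dominant contribution lies near the top of the discriminant range.

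For primitive $G$, every non-rational $\theta \in K$ generates $K$, so one may normalize $\theta$ within its $\mathbb{Z}$-translation coset (say $|\mathrm{Tr}(\theta)| < n$). A refined lattice-point count then exploits that the ``trace direction'' of the region $\{\mathrm{ht}(f_\theta) \leq B\}$ has short extent $\sim B^{1/n}$: an integer shift $\theta \mapsto \theta + m$ alters the norm coefficient $a_0 = (-1)^n N(\theta)$ by a polynomial of degree $n$ in $m$, forcing $|m| \lesssim B^{1/n}$ to keep the polynomial height bounded. This slicing sharpens the multiplicity bound to $M(K,B) \ll_n B^{1 - 2/n}(\log B)^{n-1}$, which upon re-summing yields the improved exponent $e(2n-2) + 1 - \frac{2}{n}$.

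The principal technical obstacle is the multiplicity estimate. A naive application of Cauchy's bound places $\theta$ only in a ball of radius $\sim B$, yielding the much weaker count $B^n/\sqrt{|\mathrm{Disc}(K)|}$. Achieving the claimed uniform bound $B(\log B)^{n-1}$ requires a subtler geometry-of-numbers argument exploiting all $n$ coefficient constraints simultaneously; producing the additional $B^{2/n}$ saving for primitive $G$ requires careful control of the trace-zero cross section.
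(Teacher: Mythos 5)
Your first assertion is proved exactly as in the paper: map $f$ to $K_f=\mathbb{Q}[x]/(f)$, note $|\mathrm{Disc}(K_f)|\le|\mathrm{disc}(f)|\ll B^{2n-2}$, apply the uniform multiplicity bound $M_K(B)\ll B(\log B)^{n-1}$ from Section~\ref{sec:multiplicity} (Proposition~\ref{lem:first-multiplicity}), and sum against $N_n(X;G)\ll X^e$. (Minor point: the hypothesis $e>\frac{1}{n^2-n}$ plays no role in this part; it is needed only for the primitive case.)

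The primitive case, however, contains a genuine gap. You claim a \emph{uniform per-field} bound $M(K,B)\ll_n B^{1-2/n}(\log B)^{n-1}$ obtained by slicing along $\mathbb{Z}$-translation cosets. This bound is false. For a fixed totally real field $K$, the set of $\alpha\in\mathcal{O}_K$ whose minimal polynomial has height $\le B$ contains essentially all lattice points of $\mathcal{O}_K$ in the region of Mahler measure $\ll B$, which has volume $\asymp_K B(\log B)^{n-1}$; hence $M_K(B)\gg_K B(\log B)^{n-1}$ for such $K$ (already for a fixed real quadratic field one gets $\asymp B\log B$, not $O(\log B)$). The coset-slicing heuristic fails because multiplying the number of cosets meeting the region by the worst-case count $O(B^{1/n})$ per coset badly overestimates: almost all cosets meet the region in far fewer than $B^{1/n}$ points, and the bound you would get this way is not $B^{1-2/n}(\log B)^{n-1}$.

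The saving of $B^{-2/n}$ is not available field-by-field; it emerges only on average over $\mathcal{F}_n(cB^{2n-2};G)$. The mechanism is: (i) the refined multiplicity bound $M_K(B)\ll B(\log B)^{n-1}/\lambda$, where $\lambda$ is the smallest $\|\cdot\|$-norm of an element of $\mathcal{O}_K\setminus\mathbb{Z}$ (Theorem~\ref{thm:multiplicity}); (ii) the observation that primitivity of $G$ forces $K$ to have no proper subfields, so every $\alpha\in\mathcal{O}_K\setminus\mathbb{Z}$ generates $K$ and therefore $\lambda\gg|\mathrm{Disc}(K)|^{\frac{1}{n(n-1)}}$ (Lemma~\ref{lem:height-bound}, after Ellenberg--Venkatesh); and (iii) partial summation of $|\mathrm{Disc}(K)|^{-\frac{1}{n(n-1)}}$ against $N_n(X;G)\ll X^e$, which — precisely because $e>\frac{1}{n^2-n}$ — is dominated by fields with discriminant near $B^{2n-2}$ and yields the total saving $B^{(2n-2)\cdot\frac{1}{n(n-1)}}=B^{2/n}$. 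Your normalization $|\mathrm{Tr}(\theta)|<n$ and the degree-$n$ growth of $N(\theta+m)$ in $m$ do not substitute for step (ii): the quantitative input from primitivity is a \emph{lower bound on the archimedean size of every non-rational integer of $K$ in terms of $\mathrm{Disc}(K)$}, not merely the statement that every non-rational element is a generator.
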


We recall that a permutation group $G$ on $n$ elements is said to be primitive if it preserves no nontrivial partition of the elements.  For example, when $p$ is prime, this hypothesis is satisfied for every transitive subgroup of $S_p$, so that Theorem \ref{thm:cyclic} follows from Theorem \ref{thm:general} and the upper bound $N_p(X;C_p) \ll X^{\frac{1}{p-1}}$ \cite{maki, wright}.  Theorem \ref{thm:galois}, on the other hand, follows from the first case of Theorem \ref{thm:general} and an upper bound of $O(X^{3/8 + \epsilon})$ due to Ellenberg and Venkatesh \cite[Proposition 1.3]{EllenbergVenkatesh} on the number of Galois number fields.  This result of Ellenberg and Venkatesh is not sharp, and correspondingly neither is Theorem \ref{thm:galois}.  It could likely be improved substantially for any given $n$ with modest effort.

Theorem \ref{thm:general} improves on the results provided by studying thin sets and Galois resolvents desribed above if for example its hypothesis holds with some $e \leq \frac{1}{2} - \frac{1}{2n-2}$; slightly weaker results suffice for any particular group $G$.
Malle's conjecture \cite{Malle} predicts an asymptotic of the form $N_n(X;G) \sim c(G) X^{a(G)} (\log X)^{b(G)}$, where the constant 
$a(G)$ is the inverse of an integer and satisfies $\frac{1}{n - 1} \leq a(G) \leq 1$. 
Thus, for $n \geq 5$, Theorem \ref{thm:general} conjecturally improves upon prior work whenever $a(G) \leq \frac13$.   This criterion is satisfied precisely when $G \subseteq S_n$ is such that for every $1 \neq g \in G$, the permutation action of $g$ decomposes into at most $n - 3$ orbits.

\section*{Acknowledgements}
The authors would like to thank Sam Chow, Rainer Dietmann, Michael Filaseta, and Martin Widmer for helpful feedback.

RJLO was partially supported by NSF grant DMS-1601398.  FT was partially supported by grants from the Simons Foundation (Nos. 563234 and 586594).

\section{Multiplicities of polynomials cutting out fields}
\label{sec:multiplicity}

Given a number field $K$ of degree $n$ and signature $(r_1,r_2)$, let
\[
M_K(B)
	:= \#\{ f \in \mathbb{Z}[x] \text{ monic, degree $n$} : \mathrm{ht}(f) \leq B, \mathbb{Q}(x)/f \simeq K\}
\]
be the multiplicity with which $K$ is cut out amongst polynomials of height at most $B$.

 The main result of this section is the following upper bound on $M_K(B)$.
\begin{theorem}\label{thm:multiplicity}
Write
\[
\lambda = \min\{ ||\alpha|| : \alpha \in \mathcal{O}_K \setminus \mathbb{Z}\},
\]
where for each $\alpha \in \O_K$ we write $||\alpha||$ for the maximum absolute value of $\alpha$ under the embeddings of $K$.

Then, we have
\[
M_K(B) \ll \frac{B (\log B)^{r_1+r_2-1}}{\lambda}.
\]
\end{theorem}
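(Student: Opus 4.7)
The plan is to pass from polynomials to algebraic integers in $\mathcal{O}_K$ and then to exploit the additive $\mathbb{Z}$-translation symmetry together with the multiplicative unit-group symmetry on $\mathcal{O}_K$.

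Every polynomial $f$ counted by $M_K(B)$ is the minimal polynomial of some algebraic integer $\alpha \in \mathcal{O}_K$ generating $K$ over $\mathbb{Q}$ (for any chosen identification $\mathbb{Q}(x)/f \simeq K$). Conversely, every such $\alpha$ has a unique minimal polynomial. Since $f$ admits at most $n$ distinct roots in $K$, the map $\alpha \mapsto f_\alpha$ is at most $n$-to-one, and so
\[
M_K(B) \le \#\{\alpha \in \mathcal{O}_K : \mathbb{Q}(\alpha) = K,\ \mathrm{ht}(f_\alpha) \le B\}.
\]
The height constraint forces in particular $|\mathrm{Tr}(\alpha)| \le B$ and $|N(\alpha)| \le B$, giving immediate control on trace and norm.

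I would next organize these $\alpha$'s using the equivalence $\alpha \sim \alpha + m$ for $m \in \mathbb{Z}$, which acts on minimal polynomials by $f_\alpha(x) \mapsto f_\alpha(x-m)$ and so permutes the admissible polynomials in a class. Within each equivalence class $[\alpha] \in \mathcal{O}_K/\mathbb{Z}$, the trace constraint $|\mathrm{Tr}(\alpha + m)| \le B$ restricts $m$ to an interval of length at most $2B/n + O(1)$, giving $O(B)$ representatives per class. The remaining task is to bound the number of classes $[\alpha]$ containing at least one representative with $\mathrm{ht}(f_\alpha) \le B$; the target is $\ll (\log B)^{r_1+r_2-1}/\lambda$.

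For this final bound, I would fix the canonical representative $\beta$ of each nontrivial class (say with $\mathrm{Tr}(\beta) \in [0,n)$), which lies in $\mathcal{O}_K \setminus \mathbb{Z}$ and thus satisfies $\|\beta\| \ge \lambda$. The canonical $\beta$'s with some admissible integer shift then lie in a region of Minkowski space carved out by the constraints $|e_k(\beta+m)| \le B$; one sharpens the crude per-class bound $O(B)$ to the more refined $O(B/\|\beta\|)$-type estimate in the "tails" using the norm constraint $|N(\beta+m)| = |f_\beta(-m)| \le B$, and then organizes canonical $\beta$'s by the principal ideal $(\beta)$ they generate. Dirichlet's unit theorem (rank $r_1+r_2-1$) accounts for the logarithmic factor per ideal orbit, while the dyadic decomposition over $\|\beta\|$, starting at the minimum scale $\lambda$, produces the factor $1/\lambda$.

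The main obstacle is this final step, where the additive reduction by $\mathbb{Z}$ and the multiplicative reduction by units do not commute; carefully combining them in Minkowski space, and in particular relating the trace-canonical $\beta$'s to the unit orbits of their principal ideals, is where the most delicate bookkeeping occurs and where the sharp dependence on $\lambda$ must be extracted.
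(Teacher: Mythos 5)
Your opening reductions are sound: passing to $\alpha \in \mathcal{O}_K \setminus \mathbb{Z}$ with $\mathrm{ht}(f_\alpha) \le B$, exploiting the $\mathbb{Z}$-translation structure, and invoking the norm constraint $|N(\beta+m)| = |f_\beta(-m)| \le B$ are all ingredients that appear (in some form) in the paper. The gap is in the final step, where the factors $1/\lambda$ and $(\log B)^{r_1+r_2-1}$ are supposed to emerge; as described, the bookkeeping does not close. First, the intermediate target ``number of classes $\ll (\log B)^{r_1+r_2-1}/\lambda$'' is false: for $K = \mathbb{Q}(\sqrt{D})$ real quadratic one has $\lambda \asymp \sqrt{D}$, the classes are indexed by $b \neq 0$, about $B/\sqrt{D} = B/\lambda$ of them admit a representative $a + b\sqrt{D}$ of height $\le B$, and each such class contributes about $B/(|b|\sqrt{D})$ values of $a$; the theorem's bound arises as the harmonic sum $\sum_b B/(b\sqrt{D}) \asymp (B/\lambda)\log B$, so the logarithm comes from summing over classes and the $1/\lambda$ from the sparsity of classes, not from a class count that is itself $\ll (\log B)^{r_1+r_2-1}/\lambda$. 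Second, for $n \ge 3$ the refined per-class bound ``$O(B/\|\beta\|)$'' is too weak: the classes form a rank-$(n-1)$ lattice, and $\sum_{\|\beta\| \le B} B/\|\beta\| \approx B^{n-1}/\mathrm{covol}$, which for large $B$ vastly exceeds $B(\log B)^{r_1+r_2-1}/\lambda$. What one actually needs per class is control by the full product $\prod_\sigma \max(1,|m+\beta^{(\sigma)}|^{\deg \sigma})$, and the number of admissible $m$ then depends on the mutual separation of the conjugates of $\beta$, not merely on $\|\beta\|$; your plan does not track this. Third, the unit group is a red herring: the set of $\alpha$ being counted is not closed under multiplication by units (neither the height condition nor your trace normalization is unit-invariant), and grouping by the principal ideals $(\alpha)$, which have norm $\le B$ and number $\asymp B$, reproduces at best the trivial bound $B(\log B)^{r_1+r_2-1}$ of Proposition 2.2 with no mechanism for the $\lambda$-saving.

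For comparison, the paper's proof avoids all per-class analysis. Jensen's inequality gives $m(f_\alpha) \le \sqrt{n+1}\,\mathrm{ht}(f_\alpha)$, so every relevant $\alpha$ lies in the region $\Omega_Y = \{x \in K_\infty : \prod_\sigma \max(1,|x_\sigma|^{\deg\sigma}) \le Y\}$ with $Y = \sqrt{n+1}\,B$, whose volume and coordinate projections are all $O(Y(\log Y)^{r_1+r_2-1})$; this is where the logarithmic exponent really comes from. One then rounds each $\alpha$ to the nearest point of $\delta\mathbb{Z}^n$ with $\delta \asymp \lambda$: by the definition of $\lambda$, two elements of $\mathcal{O}_K$ rounding to the same point differ by a rational integer, so the fibers have size $O(\lambda)$, and Davenport's lemma counts the occupied boxes after decomposing $\Omega_Y$ according to which coordinates exceed $\delta$. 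The rescaling by $1/\delta$ shrinks the relevant volumes by $\delta^{s_1+2s_2}$, yielding the $1/\lambda$ saving whenever at least two (real-weighted) coordinates are large, with the remaining case $s_1 + 2s_2 = 1$ handled by a direct one-variable count. If you wished to push your route through, you would essentially have to reprove these volume and lattice-point estimates by slicing $\Omega_Y$ along the $\mathbb{Z}$-direction, at which point you would have reconstructed the paper's argument.
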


Loosely speaking, the idea of the proof of Theorem \ref{thm:multiplicity} is that most elements $\alpha \in \mathcal{O}_K$ should have minimal polynomials $f_\alpha(x) = x^n + a_1 x^{n-1} + \dots + a_n$ with coefficients scaling like $|a_i| \approx ||\alpha||^i$ rather than having all coefficients of roughly the same size.  We show that the elements whose minimal polynomials are counted by $M_K(B)$ lie in a very restricted region inside Minkowski space, and then bound from above the number of elements inside that region.

To make this precise, we begin by recalling the definition of the \emph{Mahler measure} of a polynomial.  Given a monic polynomial $f \in \mathbb{C}[x]$, its Mahler measure $m(f)$ is defined to be
\[
m(f) := \prod_{\theta: f(\theta)=0} \max\{1,|\theta|\},
\]
where the product runs over the roots of $f$ with multiplicity.  A standard argument using 
Jensen's formula, which we learned about from \cite{zerocollar},
establishes that if we write $f(x) = x^n + a_1 x^{n-1} + \dots + a_n$, then
\begin{equation}\label{eq:jensen}
m(f) \leq \sqrt{1 + a_1^2 + \dots + a_n^2} \leq \sqrt{n + 1} \cdot \htt(f).
\end{equation}

Let $K \hookrightarrow \mathbb{R}^{r_1} \times \mathbb{C}^{r_2} =: K_\infty$ be the standard embedding of $K$ into Minkowski space.  For any $x \in K_\infty$, we may analogously define its Mahler measure $m(x)$ by
\[
m(x) := \prod_{\sigma} \max\{1, |x_\sigma|^{\mathrm{deg}\,\sigma}\}.
\]
For any $Y \geq 1$, we let $\Omega_Y \subseteq K_\infty$ consist of those elements whose Mahler measure is at most $Y$. 
Then by \eqref{eq:jensen} we have
\begin{equation}\label{eq:mk_bound}
M_K(B) \leq \#\{\Omega_{\sqrt{n + 1} \cdot B} \cap (\mathcal{O}_K - \Z)\},
\end{equation}
allowing us to proceed by bounding the right-hand side of \eqref{eq:mk_bound}. 
The volume of $\Omega_Y$, or of its projection onto any subset of the coordinate axes, is easily computed to be
$O_n(Y (\log Y)^{r_1+r_2-1})$. It therefore follows from {\itshape Davenport's lemma} \cite{dav_lemma} that the simpler count $\#\{ x \in \Omega_Y \cap \mathbb{Z}^n \}$ satisfies
\begin{equation}\label{eqn:davenport}
\# \{ x \in \Omega_Y \cap \mathbb{Z}^n \} \ll Y (\log Y)^{r_1+r_2-1}.
\end{equation}
For the sake of motivation, we begin by proving a weaker result than Theorem \ref{thm:multiplicity} that recovers the bound in \eqref{eqn:davenport} but for $\#\{\Omega_{\sqrt{n + 1} \cdot B} \cap (\mathcal{O}_K - \Z)\}$.

\begin{proposition}\label{lem:first-multiplicity}
With notation as above, $M_K(B) \ll B (\log B)^{r_1+r_2-1}$.
\end{proposition}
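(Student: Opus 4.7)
The plan is to apply Davenport's lemma directly to the image of $\O_K$ in $K_\infty$ under the Minkowski embedding, mirroring the argument that produced \eqref{eqn:davenport} for the standard lattice $\Z^n$.

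First, I would combine \eqref{eq:mk_bound} with the trivial inclusion $\O_K - \Z \subseteq \O_K$ to obtain
\[
M_K(B) \leq \#\bigl(\Omega_{\sqrt{n+1}\,B} \cap \O_K\bigr).
\]
Discarding the $\Z$-exclusion at this stage is exactly what prevents one from extracting the $1/\lambda$ improvement of Theorem \ref{thm:multiplicity}; the purpose of this proposition is to see what bound is already available without that savings.

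Next, I would view $\O_K$ via Minkowski as a full-rank lattice $\Lambda_K \subset K_\infty \cong \mathbb{R}^n$ of covolume $2^{-r_2}\sqrt{|\mathrm{Disc}(K)|}$, a quantity depending only on $K$. A version of Davenport's lemma for $\Lambda_K$---obtained from the $\Z^n$ statement of \cite{dav_lemma} by the linear change of variable sending $\Lambda_K$ to $\Z^n$---then bounds $\#(\Omega_Y \cap \Lambda_K)$ by a constant (depending on $K$) times $\mathrm{Vol}(\Omega_Y)$ plus the volumes of its projections onto coordinate subspaces. The excerpt already asserts that each of these is $O_n(Y(\log Y)^{r_1+r_2-1})$, so taking $Y = \sqrt{n+1}\,B$ yields the claimed bound.

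I do not anticipate a genuine obstacle in carrying this out: the proposition is essentially the statement that \eqref{eqn:davenport}, originally for $\Z^n$, continues to hold when $\Z^n$ is replaced by $\O_K$. The only mild technical wrinkle is that after changing coordinates so that $\Lambda_K$ becomes $\Z^n$, the ``coordinate projections'' appearing in Davenport's lemma are taken with respect to the new axes rather than the original axes of $K_\infty$; but the two frames differ by a linear map depending only on $K$, so projected volumes in the new frame are of the same order as those in the old, and the bound from the excerpt still applies.
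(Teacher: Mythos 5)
Your route is genuinely different from the paper's, and as written it has a real gap: the constant you end up with depends on $K$, whereas the proposition must hold with a constant depending only on $n$. This uniformity is the entire content of the statement, because in the proof of Theorem \ref{thm:general} the bound is summed over all $K \in \mathcal{F}_n(cB^{2n-2};G)$, a family whose size grows with $B$; a bound of the form $M_K(B)\ll_K B(\log B)^{r_1+r_2-1}$ contributes nothing there. The $K$-dependence enters exactly where you wave it away. After the change of variables $T$ sending $\Z^n$ to $\O_K$, Davenport's error term is the sum of the volumes of the coordinate projections of $T^{-1}\Omega_Y$, not of $\Omega_Y$, and these are comparable only up to constants controlled by the distortion of $T$ --- that is, by the shape (successive minima) of the lattice $\O_K$, which is not bounded in terms of $n$ alone; moreover $T$ is only determined up to $\mathrm{GL}_n(\Z)$, and a poor choice of integral basis makes these projections enormous. (The main term $\mathrm{Vol}(\Omega_Y)/\mathrm{covol}(\O_K)$ is harmless, since the covolume is $\gg_n 1$ by Minkowski's discriminant bound; it is only the projection terms that cause trouble.)

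The paper sidesteps the change of variables entirely. It uses only the fact that the shortest nonzero vector of $\O_K$ in Minkowski space has length $\mu \asymp_n 1$ (a nonzero algebraic integer has some archimedean absolute value at least $1$, and $\pm 1$ gives the upper bound), so that a box of side $\delta = \mu/\sqrt{n}-\eta$ contains at most one point of $\O_K$. Snapping each $\alpha \in \O_K$ to its nearest point of $\delta\Z^n$ is therefore injective, the images of points of $\Omega_Y \cap \O_K$ land in $\frac{1}{\delta}\Omega_Y + \textnormal{Box}(1)$ after rescaling by $1/\delta \asymp_n 1$, and the $\Z^n$ form of Davenport's lemma, i.e.\ \eqref{eqn:davenport}, applies to that fixed standard lattice with a constant depending only on $n$. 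To repair your argument you would need to (i) take $T$ given by a reduced integral basis, so that $\|T^{-1}\|_{\mathrm{op}} \ll_n 1$ by the lower bound on the shortest vector, and then (ii) show directly that every coordinate projection of $T^{-1}\Omega_Y$ still has volume $O_n(Y(\log Y)^{r_1+r_2-1})$, for instance by covering $\Omega_Y$ by $O((\log Y)^{r_1+r_2-1})$ dyadic boxes each of volume at most $Y$ --- at which point you have essentially rebuilt the paper's packing argument.
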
 

\begin{proof}
Let $\mu \asymp_n 1$ be the shortest length of any nonzero vector in $\O_K$.
Let $\eta > 0$ be an arbitrary small constant, write $\delta := \frac{\mu}{\sqrt{n}} - \eta$, 
and consider the dilation $\Lambda := \delta \Z^n$ of the integer lattice $\Z^n \subseteq K_\infty$. By construction, the box of side length $\delta$
centered at any point in $\delta \Z^n$ contains at most one point of $\O_K$. To each $\alpha \in \O_K$ we associate the nearest vector 
$v_\alpha \in \delta \Z^n$, choosing arbitrarily if there is more than one such. Then the map $\alpha \mapsto v_\alpha$ is injective and we write $\calS := \{ v_\alpha : \alpha \in \O_K \}$.

Let $Y = \sqrt{n+1} \cdot B$, so that $M_K(B) \leq \#(\Omega_Y \cap \mathcal{O}_K)$.
If $\alpha \in \Omega_Y \cap \O_K$, we have
$\frac{1}{\delta} v_\alpha - x \in \frac{1}{\delta} \Omega_Y$ for some
$x \in \textnormal{Box}(1)$, the unit box centered at the origin.
Since $\delta \asymp 1$, we have
\begin{align*}\label{eq:use_dav2}
\#\{\Omega_{Y} \cap \mathcal{O}_K\}
& \leq
\# \Big\{ \frac{1}{\delta} \calS \cap \Big(\frac{1}{\delta} \Omega_{Y} + \textnormal{Box}(1) \Big) \Big\} \\
& \leq
\# \Big\{ \Z^n \cap \Big(\frac{1}{\delta} \Omega_{Y} + \textnormal{Box}(1) \Big) \Big\} \\
& \ll
Y \log^{r_1 + r_2 - 1}(Y),
\end{align*}
as desired, with the last inequality following from Davenport's lemma.
\end{proof}

\begin{proof}[Proof of Theorem \ref{thm:multiplicity}]
In the proof of Proposition \ref{lem:first-multiplicity}, we still let $Y = \sqrt{n+1}\cdot B$ but now choose $\delta := \frac{\lambda}{\sqrt{n}} - \eta$ and define
$\alpha$ and $\calS$ as before. The map $\alpha \mapsto v_\alpha$ is no longer injective, but it has the property that 
 $\alpha - \alpha' \in \Z$ whenever $v_\alpha = v_{\alpha'}$, and hence it has preimages of size $O(\lambda)$.

We now decompose the region $\Omega_Y$ as $\Omega_Y = \bigcup_{s_1, s_2} \Omega_{Y, s_1, s_2}$, where 
\[
\Omega_{Y, s_1, s_2} := \{ x \in \Omega_Y \ : \ |x_\sigma| \geq \delta \textnormal{ for exactly } s_1 \textnormal{ real and } s_2 \textnormal{ complex places } \delta \}.
\]
For each $\Omega_{Y, s_1, s_2}$, we now have that
\begin{align*}\label{eq:use_dav3}
\#\{\Omega_{Y, s_1, s_2} \cap \mathcal{O}_K\}
& \leq
O(\lambda) \cdot \# \Big\{ \frac{1}{\delta} \calS \cap \Big(\frac{1}{\delta} \Omega_{Y, s_1, s_2} + \textnormal{Box}(1) \Big) \Big\} \\
& \leq
O(\lambda) \cdot \# \Big\{ \Z^n \cap \Big(\frac{1}{\delta} \Omega_{Y, s_1, s_2} + \textnormal{Box}(1) \Big) \Big\} \\
& \ll
\lambda \cdot \frac{Y (\log Y)^{r_1 + r_2 - 1}}{\delta^{s_1 + 2s_2}} \\
& \ll
\frac{Y (\log Y)^{r_1 + r_2 - 1}}{\lambda^{s_1 + 2s_2 - 1}}.
\end{align*}
This bound is as desired when $s_1 + 2s_2 \geq 2$, leaving only the exceptional regions $\Omega_{Y,0,0}$ and $\Omega_{Y,1,0}$,  the latter region being nonempty only if $K$ has a real place.

The region $\Omega_{Y, 0, 0}$, if it is nonempty, is contained within a single box of length $\delta$ centered at the origin, and hence contains
only rational integers that do not contribute to $M_K(B)$.

To bound $\#(\Omega_{Y,1, 0} \cap \mathcal{O}_K)$, note that for each $\alpha \in \Omega_{Y, 1, 0}$, 
the corresponding $v_\alpha$ has exactly one nonzero coordinate.
For each such $v$, the number of $\alpha \in \mathcal{O}_K \cap \Omega_{Y, 1,0}$ with $v_\alpha = v$ is bounded above by the number of 
integers $k$ with $m(v + k) \leq Y$, which is
$\ll \big( \frac{Y}{||v||} \big)^{\frac{1}{n - 1}}$.
We therefore have
\[
\#(\Omega_{Y,1, 0} \cap \mathcal{O}_K)
	\ll
	 \sum_{1 \leq j \leq \frac{Y}{\delta} + 1} \left(\frac{Y}{\delta j}\right)^{\frac{1}{n-1}} \ll \frac{Y \log Y}{\delta} \asymp \frac{Y \log Y}{\lambda},
\]
the factor of $\log Y$ being extraneous unless $n = 2$, but harmless to include in all cases.
 
The theorem now follows by summing the above bounds over all $s_1$ and $s_2$.
\end{proof}

\section{Proof of Theorem \ref{thm:general}}

We now turn to the proof of Theorem \ref{thm:general}.  If $f \in \mathbb{Z}[x]$ is monic of degree $n$ and satisfies $\mathrm{ht}(f) \leq B$, then its discriminant satisfies $|\mathrm{Disc}(f)| \ll B^{2n-2}$.  It follows that there is some positive constant $c$ such that
\[
P_n(B;G)
	\leq \sum_{K \in \mathcal{F}_n(c B^{2n-2}; G)} M_K(B).
\]
Appealing to Proposition \ref{lem:first-multiplicity}, we see that $M_K(B) \ll B (\log B)^{r_1+r_2-1} \leq B (\log B)^{n-1}$.  By our hypothesis that $N_n(X;G) \ll X^e$ for every sufficiently large $X$, we conclude that
\[
P_n(B;G)
	\ll B^{e(2n-2) + 1} (\log B)^{n-1},
\]
yielding the first case of the theorem.

For the second case, we require the following lemma of Ellenberg and Venkatesh.

\begin{lemma}\label{lem:height-bound}
Let $K/\mathbb{Q}$ be an extension of degree $n$.  Then for any $\alpha \in \mathcal{O}_K$ such that $K=\mathbb{Q}(\alpha)$, there holds $||\alpha|| \gg |\mathrm{Disc}(K)|^{\frac{1}{n(n-1)}}$.
\end{lemma}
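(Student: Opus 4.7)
The plan is to compare two notions of discriminant: that of the ring $\mathbb{Z}[\alpha]$, which can be computed directly in terms of $\alpha$ and its conjugates, and that of the full ring of integers $\mathcal{O}_K$, which is what appears in the statement.

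First, since $K = \mathbb{Q}(\alpha)$ with $[K:\mathbb{Q}]=n$, the element $\alpha$ has a minimal polynomial of degree $n$ with roots $\alpha = \alpha_1, \ldots, \alpha_n$ (the Galois conjugates over $\mathbb{Q}$). The polynomial discriminant is $\operatorname{Disc}(\mathbb{Z}[\alpha]) = \prod_{i<j}(\alpha_i - \alpha_j)^2$. Next, recall the standard identity $\operatorname{Disc}(\mathbb{Z}[\alpha]) = [\mathcal{O}_K : \mathbb{Z}[\alpha]]^2 \cdot \operatorname{Disc}(K)$, so in particular $|\operatorname{Disc}(K)| \leq |\operatorname{Disc}(\mathbb{Z}[\alpha])|$.

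Second, I would bound each factor $|\alpha_i - \alpha_j|$ from above. By the definition of $\|\cdot\|$, every Galois conjugate $\alpha_i$ lies in the closed disc of radius $\|\alpha\|$ in $\mathbb{C}$, so $|\alpha_i - \alpha_j| \leq 2\|\alpha\|$ for every pair. Taking the product over the $\binom{n}{2}$ pairs and squaring yields
\[
|\operatorname{Disc}(K)| \;\leq\; \prod_{i<j}|\alpha_i - \alpha_j|^2 \;\leq\; 2^{n(n-1)} \|\alpha\|^{n(n-1)}.
\]
Taking the $n(n-1)$-th root of both sides gives $\|\alpha\| \gg |\operatorname{Disc}(K)|^{1/(n(n-1))}$, with implied constant depending only on $n$, completing the argument.

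There is no real obstacle: the proof is a two-line comparison once one recalls that the index $[\mathcal{O}_K : \mathbb{Z}[\alpha]]$ enters the discriminant of $\mathbb{Z}[\alpha]$ with a square, ensuring that $\operatorname{Disc}(K)$ is smaller than (or equal to in absolute value to) the polynomial discriminant. The only subtlety worth flagging is the standing convention that $\|\alpha\|$ controls the complex absolute values at all archimedean places simultaneously, which is what makes the uniform bound $|\alpha_i - \alpha_j| \leq 2\|\alpha\|$ valid across all pairs of conjugates.
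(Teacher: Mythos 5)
Your proof is correct and is essentially the paper's argument, just written out in full: the paper likewise bounds $|\mathrm{Disc}(K)|$ by the discriminant of the minimal polynomial of $\alpha$ (via the index-squared relation you cite) and bounds that polynomial discriminant by $O(\|\alpha\|^{n(n-1)})$ using the fact that all conjugates have absolute value at most $\|\alpha\|$. No discrepancies to report.
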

\begin{proof}
This is essentially \cite[Lemma 3.1]{EllenbergVenkatesh}, but we recap the proof here because it is brief.  The minimal polynomial $f_\alpha$ of such an $\alpha$ has discriminant bounded above by $O(||\alpha||^{n(n-1)})$.  Since the discriminant of $f_\alpha$ is an upper bound on $\mathrm{Disc}(K)$, the result follows.
\end{proof}

If the group $G$ is primitive, then any field $K \in \mathcal{F}_n(X;G)$ has no proper subextensions.  By Lemma \ref{lem:height-bound}, it follows that for such fields, we may take $\lambda = |\mathrm{Disc}(K)|^{\frac{1}{n(n-1)}}$ in Theorem \ref{thm:multiplicity}.  So doing, we have
\[
P_n(B;G)
	\ll \sum_{K \in \mathcal{F}_n(c B^{2n-2}; G)} \frac{B (\log B)^{n-1}}{ |\mathrm{Disc}(K)|^{\frac{1}{n(n-1)}}}
	\ll B^{e(2n-2)+1-\frac{2}{n}} (\log B)^{n-1}
\]
by partial summation and our assumption that $e > \frac{1}{n^2-n}$.  This completes the proof.

\section{Further examples}

While obtaining upper bounds on $N_n(X;G)$ that are useful in Theorem \ref{thm:general} is difficult in general, there are some groups $G$ for which strong results are available.  
For example Kl\"uners and Malle \cite{KlunersMalle} proved that for a nilpotent group 
$G \subseteq S_n$ of order $n$ acting in its regular representation, we have $N_n(X; G) \ll X^e$ with $e = \frac{\ell}{n(\ell - 1)} + \epsilon$,
where $\ell$ is the smallest prime divisor of $n$. We therefore obtain:

\begin{theorem}\label{thm:nilpotent}
Let $G$ be a nilpotent group of order $n$, and let $\ell$ be the smallest prime divisor of $|G|$.  Then for any $\epsilon>0$,
\[
P_n(B;G)
	\ll_\epsilon B^{\frac{2\ell}{\ell -1} + 1 - \frac{2\ell}{(\ell-1)n} + \epsilon}.
\]
\end{theorem}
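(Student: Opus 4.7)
The plan is to obtain Theorem \ref{thm:nilpotent} as a direct application of the first case of Theorem \ref{thm:general}, feeding in the Kl\"uners--Malle bound recalled just above. For a nilpotent group $G$ of order $n$ acting in its regular representation, one has $N_n(X;G) \ll_\epsilon X^{e}$ with $e = \frac{\ell}{n(\ell-1)} + \epsilon$, where $\ell$ is the smallest prime divisor of $n$.

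First I would verify the hypothesis $e > \frac{1}{n(n-1)}$ of Theorem \ref{thm:general}; ignoring the negligible $\epsilon$, this reduces to $\ell(n-1) > \ell - 1$, which holds for every $n \geq 2$. Next I would observe that the sharper (primitive) case of Theorem \ref{thm:general} is generally not available, because the regular representation of $G$ is primitive if and only if $G$ has no nontrivial proper subgroups, i.e., $G \simeq C_p$ for a prime $p$. I therefore apply the first (non-primitive) conclusion of Theorem \ref{thm:general} to obtain
\[
P_n(B;G) \ll B^{e(2n-2)+1} (\log B)^{n-1}.
\]
A short computation shows that $e(2n-2) + 1 = \frac{2\ell}{\ell-1} + 1 - \frac{2\ell}{(\ell-1) n} + O(\epsilon)$, and absorbing the $(\log B)^{n-1}$ factor into an additional $B^\epsilon$ yields exactly the stated bound.

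Since every step is a direct invocation of a previously stated result or a short piece of arithmetic, there is no genuine obstacle to overcome: the substantive work has already been done in Theorem \ref{thm:general} and in \cite{KlunersMalle}, and the deduction of Theorem \ref{thm:nilpotent} amounts to bookkeeping.
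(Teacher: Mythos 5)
Your proposal is correct and matches the paper's own (essentially one-line) deduction: the paper likewise feeds the Kl\"uners--Malle bound $e = \frac{\ell}{n(\ell-1)} + \epsilon$ into the first case of Theorem \ref{thm:general} and absorbs the logarithm into $B^\epsilon$. Your extra checks (the hypothesis $e > \frac{1}{n^2-n}$ and the observation that the regular representation is imprimitive unless $G \simeq C_p$) are accurate and, if anything, more careful than the paper's exposition.
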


We recall for the convenience of the reader that abelian groups are nilpotent, so Theorem \ref{thm:nilpotent} applies whenever $G$ is abelian. Indeed,
in the abelian case, Wright's earlier work \cite{wright} implies that the factor of $B^{\epsilon}$ may be replaced by a power of $\log B$.

In fact, Malle's conjecture \cite{Malle} implies that the conclusion of Theorem \ref{thm:nilpotent} should hold for any group $G$ of order $n$, not just nilpotent groups; this suggests for example that $P_n^\mathrm{gal}(B) \ll B^{5 - \frac{4}{n}+\epsilon}$ for every $n$.  Additionally, recent work of Alberts \cite{Alberts} establishes the weak Malle conjecture for nilpotent groups $G$ in every representation, not just their regular representation; in many cases, this would yield strong upper bounds on $P_n(B;G)$ for nilpotent groups $G$ with degree $n$ representations.

\bibliographystyle{alpha}
\bibliography{refs-polynomialgalois}

\end{document}